\theoremstyle{plain}
\newtheorem{theorem}{Theorem}
\theoremstyle{definition}
\newtheorem{question}{Question}
\newtheorem{proposition}{Proposition}[section]
\newtheorem{corollary}[proposition]{Corollary}
\newtheorem{lemma}[proposition]{Lemma}
\newtheorem{definition}[proposition]{Definition}
\theoremstyle{remark}
\DeclareMathOperator{\dom}{dom}
\DeclareMathOperator{\cl}{cl}
\DeclareMathOperator{\ran}{ran}
\newcommand{\sk}{\vskip.05in}
\newcommand{\restr}{\upharpoonright}
\newcommand{\forces}{\Vdash}
\newcommand{\f}{\mathcal{F}}
\newcommand{\subs}{\subseteq}
\newcommand{\sups}{\supseteq}
\numberwithin{equation}{section}
\begin{document}
\title{{C}{H} and the Moore-Mrowka Problem}
\author{Alan Dow}
\address{Department of Mathematics\\
University of North Carolina, Charlotte\\
Charlotte, NC 28223}
\email{adow@uncc.edu}
\author{Todd Eisworth}
\address{Department of Mathematics\\
         Ohio University\\
         Athens, OH 45701}
\email{eisworth@math.ohiou.edu}

 \keywords{}
 \subjclass{}
\date{\today}
\begin{abstract}
We show that the Continuum Hypothesis is consistent with all regular spaces of hereditarily countable $\pi$-character being $C$-closed. This gives us
a model of {\sf ZFC} in which the Continuum Hypothesis holds and compact Hausdorff spaces of countable tightness are sequential.
\end{abstract}

\maketitle

\section{Introduction}

Our goal in this paper is to prove that the Continuum Hypothesis is consistent with the following statement:

\begin{list}{$\circledast$}{\setlength{\leftmargin}{.5in}\setlength{\rightmargin}{.5in}}
\item Regular spaces that are hereditarily of countable $\pi$--character
are $C$-closed.
\end{list}

The principle $\circledast$ may seem technical (and mysterious given the lack of definitions), but it turns out to be of interest for a couple of reasons:
\begin{itemize}
\item It is unknown if $\circledast$ is a consequence of the Proper Forcing Axiom or even Martin's Maximum, and our consistency proof seems to need that
the Continuum Hypothesis holds in the final model.
\sk
\item More importantly, in the presence of $2^{\aleph_0}<2^{\aleph_1}$, $\circledast$ is strong enough to imply that compact spaces of countable tightness are sequential, and so our model provides the final piece to the
solution of the well-known ``Moore-Mrowka + {\sf CH} problem'' in set-theoretic topology.
\sk
\end{itemize}

The Moore-Mrowka problem was long considered to be one of the major problems
in set-theoretic topology. The reader may be interested in a brief review.
The problem, asking if every countably tight
 compact space was sequential,
was raised by Moore and Mrowka in 1964. In the
mid-seventies two of the most famous examples in topology,
Ostaszewski's S-space \cite{Ost1976} and Fedorchuk's S-space \cite{Fed1976},
established that $\diamondsuit$ implied there were counterexamples.
The Moore-Mrowka statement was proven to be consistent by
Balogh \cite{Bal1989} by showing that it was a consequence of the
 proper forcing axiom. Nevertheless, since $\diamondsuit$ implies {\sf CH}, Arhangelskii asked
\cite{Arh1978} (Problem 26) if the Continuum was sufficient to produce a
counterexample. This problem was again raised by Shakhmatov
\cite{Shak1992}(2.13) in the influential Recent Progress in General Topology.

Before moving on, we provide the reader with some definitions, looking first at some important cardinal functions for topological spaces.

\begin{definition}
Let $X$ be a topological space.
\begin{enumerate}
\item $z\in X$ is a point of countable tightness in $X$ ($t(z, X)=\aleph_0)$ if whenever $A\subseteq X$ and $z\in\cl_X(A)$, there is a countable
$A_0\subseteq A$ such that $z\in\cl_X(A_0)$.
\sk
\item $X$ is countably tight ($t(X)=\aleph_0$) if $t(z, X)=\aleph_0$ for every $z\in X$.
\sk
\item $X$ has countable $\pi$-character ($\pi\chi(X)=\aleph_0$) if   for any point $x\in X$ there is a countable collection $\{U_n:n\in\omega\}$ of
non-empty open
sets such that for any open neighborhood $U$ of $x$ there is an $n$ with $U_n\subseteq U$.  Note that we are not requiring that $x$ is a member of $U_n$, so
this is a weakening of first countability.
\sk
\item $X$ is hereditarily of countable $\pi$-character ($h\pi\chi(X)=\aleph_0$) if $\pi\chi(Y)=\aleph_0$ whenever $Y\subseteq X$.
\sk
\end{enumerate}
\end{definition}

An elementary argument shows that $h\pi\chi(X)=\aleph_0\Longrightarrow t(X)=\aleph_0$,
and the reverse implication is true if $X$ is compact (Hausdorff) by a deep theorem of Sapirovskii~\cite{sapirovskii}.

\begin{definition}
Let $X$ be a topological space.
\begin{enumerate}
\item $X$ is countably compact if every infinite subset has a point of accumulation, or equivalently, if every countable open cover of $X$ has a finite subcover.
\sk
\item $X$ is $C$-closed if every countably compact subset of $X$ is closed in $X$.
\sk
\item A subset $Y$ of $X$ is sequentially closed if whenever $\langle x_n:n<\omega\rangle$ is a convergent sequence of points from $Y$, the limit of the sequence is also in $Y$,
that is, $Y$ is closed under the operation of taking limits of convergent sequences.
\sk
\item $X$ is sequential if every sequentially closed subset of $X$ is closed.
\sk
\end{enumerate}
\end{definition}

Given the above definitions, we can now illustrate the power of our principle $\circledast$:

\begin{proposition}\label{chmm}
\leavevmode
\begin{enumerate}
\item Assume $2^{\aleph_0}<2^{\aleph_1} + \circledast$.  Then  compact spaces of countable tightness are sequential.
\sk
\item  $\circledast$ implies that if $X$ is a countably compact regular space satisfying $h\pi\chi(X)=\aleph_0$, then every non-isolated point in $X$ is the limit of a non-trivial convergent sequence.
\end{enumerate}
\end{proposition}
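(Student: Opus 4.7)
For part (2) the plan is a direct application of $\circledast$ followed by a countable compactness argument. Since $X$ is regular with $h\pi\chi(X)=\aleph_0$, the principle $\circledast$ gives that $X$ is $C$-closed. Fix a non-isolated $x\in X$ and argue by contradiction: assume no sequence in $X\setminus\{x\}$ converges to $x$. The set $X\setminus\{x\}$ is not closed in $X$ (since $x$ is non-isolated), so $C$-closedness forces it to fail countable compactness. Pick an infinite countable $A\subseteq X\setminus\{x\}$ with no accumulation point in $X\setminus\{x\}$; since $X$ itself is countably compact, $x$ must then be the unique accumulation point of $A$ in $X$, from which it follows that $A$ is discrete in itself and $\cl(A)=A\cup\{x\}$ is a closed, hence countably compact, subspace of $X$. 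The last step is to show that every open neighborhood $U$ of $x$ contains all but finitely many points of $A$: if $A\setminus U$ were infinite it would need an accumulation point in the countably compact space $\cl(A)$, but no point of $A$ can qualify (since $A$ is discrete in itself) and neither can $x$ (since $A\setminus U\subseteq X\setminus U$ lies in a closed set missing $x$). Enumerating $A$ as $\{a_n:n\in\omega\}$ then provides a non-trivial sequence converging to $x$, contradicting the assumption.

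For part (1) the plan is to combine part (2) with Sapirovskii's theorem and the assumption $2^{\aleph_0}<2^{\aleph_1}$. Sapirovskii's theorem, cited in the excerpt, implies that every closed subspace of the compact countably tight space $X$ has countable $\pi$-character, and since the $\pi$-character of a subspace is bounded by that of its closure, this delivers $h\pi\chi(X)=\aleph_0$. Thus $\circledast$ applies, $X$ is $C$-closed, and part (2) hands us a non-trivial convergent sequence at every non-isolated point of $X$ (or of any closed subspace). Suppose for contradiction that $A\subseteq X$ is sequentially closed but not closed; since $A$ is not closed, $C$-closedness forces $A$ to fail countable compactness, so there is an infinite countable $C\subseteq A$ with no accumulation point in $A$. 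Compactness of $X$ then produces an accumulation point $z\in\cl(C)\setminus A$ of $C$, and the goal is to extract a sequence in $C\subseteq A$ converging to $z$, which would contradict sequential closedness of $A$.

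The delicate step is producing the sequence inside $C$ itself, since part (2) applied to $\cl(C)$ only delivers a sequence in $\cl(C)\setminus\{z\}$ converging to $z$, and a priori this sequence may pass through $\cl(C)\setminus C$ rather than $C$. My plan is first to use countable tightness to reduce to the case where $X=\cl(C)$ is separable; Arhangelskii's inequality then gives $|X|\le 2^{\aleph_0}<2^{\aleph_1}$. Using this size restriction together with iterated applications of part (2) to judiciously chosen closed subspaces, one refines the picture until the required sequence is found inside $C$. This final refinement, which genuinely uses the cardinal arithmetic $2^{\aleph_0}<2^{\aleph_1}$, is the main obstacle in the plan.
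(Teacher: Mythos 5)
Your part (2) is correct and is essentially the paper's own argument: $\circledast$ gives $C$-closedness, so $X\setminus\{x\}$ fails to be countably compact, and a countable set that is closed and discrete in $X\setminus\{x\}$ must converge to $x$ by countable compactness of $X$ (your verification that every neighborhood of $x$ contains all but finitely many points of the set is just the paper's last sentence spelled out). Likewise your reduction in part (1) via Sapirovskii's theorem to $h\pi\chi(X)=\aleph_0$, and hence to $C$-closedness of $X$, matches the paper.

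The problem is the rest of part (1), and you have flagged it yourself: the step of producing a convergent sequence inside the countable set $C\subseteq A$ is exactly where all the content lies, and your proposal does not contain an argument for it --- ``iterated applications of part (2) to judiciously chosen closed subspaces'' together with Arhangelskii's inequality is a hope, not a proof, and part (2) only ever hands you sequences living in $\cl_X(C)\setminus\{z\}$, which may be disjoint from $C$ (indeed from $A$). This gap is genuine and not easily patched from scratch: the paper disposes of it by citing the theorem of Ismail and Nyikos that, assuming $2^{\aleph_0}<2^{\aleph_1}$, a compact Hausdorff space is $C$-closed if and only if it is sequential, so part (1) is Sapirovskii plus $\circledast$ plus that citation. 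That this external input is really needed is underscored by the paper's own Question~3 (the Ismail--Nyikos problem): whether $C$-closed compact spaces must be sequential \emph{without} the cardinal arithmetic is open, so any correct completion of your plan must use $2^{\aleph_0}<2^{\aleph_1}$ in an essential, nontrivial way --- in effect reproving the Ismail--Nyikos theorem --- and your sketch does not indicate how. To repair the write-up, either quote the Ismail--Nyikos result as the paper does, or supply its proof; as it stands, part (1) is unproved.
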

\begin{proof}
First, suppose $X$ is compact and of countable tightness.  By Sapirovskii's Theorem \cite{sapirovskii}, we know $h\pi\chi(X)=\aleph_0$, and so by $\circledast$ we know
that $X$ is $C$-closed.  By a result of Ismail and Nyikos \cite{ismailnyikos}, $2^{\aleph_0}<2^{\aleph_1}$ implies that a compact Hausdorff space is $C$-closed if and only if it sequential, and we are done.

For the second, suppose $X$ is as described, and $x$ is a non-isolated point.  Since $X$ is $C$-closed, we know that $X\setminus\{x\}$ is not countably compact.  This means we can find a (countably infinite) set
\begin{equation*}
\{x_n:n\in\omega\}\subseteq X\setminus\{x\}
\end{equation*}
that is closed and discrete in $X\setminus\{x\}$.  Since $X$ is countably compact, it follows that $\{x_n:n\in\omega\}$ must
converge to $x$ in $X$.
\end{proof}

By (1) above, any model of {\sf CH} + $\circledast$ gives us a model of {\sf CH} in which compact spaces of countable tightness are sequential.  At this point, we do not know if $\circledast$ can hold in a model where {\sf CH} fails.

The conclusion of (2) says that $X$ is {\em $C$-sequential}, a notion introduced and studied by Ran{\v{c}}in~\cite{rancin}.  Hajnal and Juh{\'a}sz show that {\sf CH} implies the existence of a countably compact regular space of countable tightness with no non-trivial convergent sequences at all, so the use of hereditary $\pi$-character rather than tightness is critical.

\section{Bad triples}

\begin{definition}
We call $\vec{X}=\langle X, Y, z\rangle$ a {\em relevant triple} if
\begin{enumerate}
\item $X$ and $Y$ are regular separable topological spaces
\sk
\item the underlying set of $Y$ is $\omega_1$
\sk
\item $Y= X\cup\{z\}$
\sk
\item $X$ is countably compact, and
\sk
\item $z$ is not isolated in $Y$.
\sk
\end{enumerate}
A relevant triple $\langle X, Y, z\rangle$ is {\em bad}  (and called a {\em bad triple}) if in addition
\begin{enumerate}
\setcounter{enumi}{4}
\sk
\item $h\pi\chi(X)=\aleph_0$, and
\sk
\item $t(z, Y)=\aleph_0$.
\end{enumerate}
\end{definition}

Note that the separability of $Y$ follows from the separability of $X$, so this assumption is superfluous. Also, the requirement that the underlying
set of $Y$ is $\omega_1$ is included for technical reasons: if we require only that $|Y|=\aleph_1$, then clearly $Y$ is homeomorphic to some space
with underlying set $\omega_1$.

Bad triples are relevant to our project because of the following fact:

\begin{proposition}
\label{prop2}
Assume {\sf CH} holds.  If $\circledast$ fails, then there is a bad triple.
\end{proposition}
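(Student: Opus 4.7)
The plan starts from the failure of $\circledast$: fix a regular space $W$ with $h\pi\chi(W)=\aleph_0$ that has a countably compact subspace $X$ which is not closed in $W$, and pick $z\in\cl_W(X)\setminus X$. Since $h\pi\chi(W)=\aleph_0$ entails $t(W)=\aleph_0$ (the elementary implication mentioned just after the definition of $h\pi\chi$), choose a countable $A\subseteq X$ with $z\in\cl_W(A)$, and replace $X$ by $X_0:=\cl_X(A)$. This replacement is free: $X_0$ remains countably compact as a closed subspace of a countably compact space, is now separable with dense set $A$, and still satisfies $z\in\cl_W(X_0)\setminus X_0$. At this point $\langle X_0,\,X_0\cup\{z\},\,z\rangle$ meets every bad-triple clause except possibly the size-and-underlying-set condition on~$Y$.

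The size is reduced using CH. Under CH one has $\aleph_1^{\aleph_0}=\aleph_1$, so one can construct an elementary submodel $M\prec H(\theta)$ for sufficiently large regular $\theta$ with $|M|=\aleph_1$, $\omega_1\cup\{W,X_0,A,z\}\subseteq M$, and $[M]^{\aleph_0}\subseteq M$; the last property is secured by iterating an $\omega_1$-length chain of size-$\aleph_1$ elementary submodels that is closed under countable subsets at each stage. Let $X':=X_0\cap M$ and $Y':=X'\cup\{z\}$, both endowed with the subspace topology inherited from $W$.

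The verification is then routine apart from one step. Regularity passes to subspaces, $A\subseteq M$ remains dense in $X_0$ and hence in $X'$, the inclusion $z\in\cl_W(A)\subseteq\cl_W(X')$ keeps $z$ non-isolated in $Y'$, and the bounds $h\pi\chi(X')\le h\pi\chi(W)=\aleph_0$ and $t(z,Y')\le t(W)=\aleph_0$ hold by restriction. The nontrivial point is countable compactness of $X'$: given $B\in[X']^{\aleph_0}$, countable closure puts $B\in M$, and elementarity applied to the true statement that $X_0$ is countably compact produces an accumulation point of $B$ inside $X_0\cap M=X'$. Finally $|Y'|=\aleph_1$: the upper bound is $|M|=\aleph_1$, and if $X'$ were countable it would be a countable regular countably compact space, hence Lindel\"of plus countably compact, hence compact, hence closed in $W$, which would force $z\in X'\subseteq X$ against the choice of~$z$. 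A homeomorphism then relabels the underlying set of $Y'$ as $\omega_1$.

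The main obstacle to plan around is arranging for $X'$ to inherit \emph{both} countable compactness and separability simultaneously. Countable compactness is not hereditary, which is why one must pass through an elementary submodel closed under countable subsets; separability is handled in advance by the closure trick $X_0=\cl_X(A)$, guaranteeing that a pre-selected countable dense set survives in any elementary submodel containing it. CH itself enters only through the cardinal identity $\aleph_1^{\aleph_0}=\aleph_1$ used to construct~$M$.
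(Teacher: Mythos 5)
Your proposal is correct and follows essentially the same route as the paper: pass to a separable countably compact subspace via countable tightness, then use {\sf CH} to build an elementary submodel of size $\aleph_1$ closed under countable sequences and take its trace, with countable compactness of the trace verified exactly as in the paper. The only (harmless) differences are cosmetic: you check heredity of $\pi$-character and tightness directly rather than via elementarity, and you spell out explicitly why $|Y'|=\aleph_1$, a point the paper leaves implicit.
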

\begin{proof}
Assume $Z$ is a counterexample to $\circledast$, so $Z$ is regular, hereditarily of countable $\pi$-character, but not $C$-closed.  Let $W$
be a countably compact non-closed subset of $Z$,and choose $z\in\cl_Z(W)\setminus W$.

Since $h\pi\chi(Z)=\aleph_0$, we know $Z$ has countable tightness, and so there is a countable $W_0\subseteq W$ with $z\in\cl_Z(W_0)$.
Since $\cl_Z(W_0)$ is also countably compact and not closed, we can without loss of generality assume that $W$ is separable.

A separable regular space has weight at most $2^{\aleph_0}$. Since we are assuming {\sf CH} that means $w(W)$ is at most $\aleph_1$.
Certainly $w(W)$ is uncountable, as regular spaces with countable bases are metrizable and countably compact metrizable spaces are compact.
Thus,  $W$ has weight exactly $\aleph_1$.

Since the Continuum Hypothesis holds, we can find $M$ such that
\begin{itemize}
\item $M$ is an elementary submodel of $H(\chi)$ for some sufficiently large regular $\chi$,
\sk
\item $M$ has cardinality $\aleph_1$,
\sk
\item $M$ is closed under $\omega$-sequences, and
\sk
\item $Z$, $W$, and $z$ are all elements of $M$.
\end{itemize}

Now let $X= M\cap W$ (topologized as a subspace of $W$) and $Y= X\cup\{z\}$.  We claim that $\langle X, Y, z\rangle$ is a bad triple.

Elementarity arguments tell us that $X$ and $Y$ are regular, $X$ is hereditarily of countable $\pi$-character, $z$ is not isolated in $Y$, and
$t(z, Y)=\aleph_0$.  Note that $X$ (and hence $Y$) is separable, as $M$ will contain every member of a (countable) dense subset of $W$.

The only remaining issue of substance is whether $X$ is countably compact, but this follows easily using the fact that $M$ is closed under $\omega$-sequences:
any countably infinite subset $A$ of $X$ is a member of $M$, and by elementary $M$ will contain a limit point of $A$ (as $M$ knows
that $W$ is countably compact) and therefore $A$ has a limit point in $X$.
\end{proof}

\section{Adjoining a filter}

This section is critical as it contains the main new idea necessary to produce our desired model.

\begin{definition}
Suppose $\vec{X}=\langle X, Y, z\rangle$ is a bad triple.  We define
\begin{equation}
\mathbb{P}[\vec{X}]=\{A\subseteq X: \text{ $A$ is a separable closed subset of $X$ with } z\in\cl_Y(A)\}
\end{equation}
and order $\mathbb{P}[\vec{X}]$ by setting
\begin{equation}
A\leq B\Longleftrightarrow A\subseteq B.
\end{equation}
\end{definition}
 if $A$ is a closed subset of $X$ and $z\in\cl_Y(A)$, then there is a $B\in\mathbb{P}[\vec{X}]$ such that $B\subseteq A$.

\begin{proposition}
With $\vec{X}=\langle X, Y, z\rangle$ and $\mathbb{P}=\mathbb{P}_{\vec{X}}$ as above, we have:
\begin{enumerate}
\item If $A$ is a closed subset of $X$ with $z\in\cl_Y(A)$, then there is a $B\in\mathbb{P}[\vec{X}]$ with $B\subseteq A$.
\sk
\item $\mathbb{P}$ is a countably closed notion of forcing, and
\sk
\item for each open neighborhood $U$ of $z$ in $Y$, $D^U:=\{A\in \mathbb{P}: A\subseteq U\}$ is dense in $\mathbb{P}$.
\end{enumerate}
\end{proposition}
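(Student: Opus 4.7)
The plan is to treat the three parts in the order (1), (3), (2), because (3) is a minor variation on (1) and (2) will call on (1).

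For (1), the idea is to exploit the countable tightness of $Y$ at $z$: given a closed $A \subseteq X$ with $z \in \cl_Y(A)$, I would choose a countable $D \subseteq A$ with $z \in \cl_Y(D)$ and set $B := \cl_X(D)$. By construction $B$ is closed in $X$, separable (with $D$ dense), contained in $A$ (since $A$ is closed in $X$), and still satisfies $z \in \cl_Y(B)$; hence $B \in \mathbb{P}[\vec{X}]$.

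For (3), fix $A \in \mathbb{P}$ and an open neighborhood $U$ of $z$ in $Y$. My first move is to use the regularity of $Y$ at $z$ to pick an open $V \ni z$ with $\cl_Y(V) \subseteq U$. A short density check gives $z \in \cl_Y(A \cap V)$: for any open neighborhood $W$ of $z$, the set $V \cap W$ is also a neighborhood of $z$ and must meet $A$, necessarily inside $V$. I then repeat the construction from (1) with $A \cap V$ in place of $A$, taking a countable $D \subseteq A \cap V$ with $z \in \cl_Y(D)$ and setting $B := \cl_X(D)$. Then $B \subseteq A$ and $B \subseteq \cl_X(V \cap X) \subseteq \cl_Y(V) \subseteq U$, so $B \in D^U$ with $B \leq A$.

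For (2), let $\langle A_n : n < \omega \rangle$ be a decreasing sequence in $\mathbb{P}$ and set $A := \bigcap_n A_n$; this is closed in $X$ and contained in every $A_n$, so once I show $z \in \cl_Y(A)$, part (1) produces a lower bound $B \subseteq A$. To verify it, fix an open neighborhood $U$ of $z$ and shrink via regularity to $V$ with $z \in V$ and $\cl_Y(V) \subseteq U$. Since each $A_n \in \mathbb{P}$, I may pick $a_n \in V \cap A_n$. If $\langle a_n \rangle$ takes only finitely many values, then (because the $A_n$ are nested) one value lies in every $A_n$, hence in $A \cap V \subseteq A \cap U$. Otherwise $\{a_n : n < \omega\}$ is infinite, and countable compactness of $X$ yields an accumulation point $x \in X$; closedness of each $A_n$ together with $\{a_k : k \geq n\} \subseteq A_n$ forces $x \in A_n$ for every $n$, so $x \in A$, and finally $x \in \cl_X(V \cap X) \subseteq \cl_Y(V) \subseteq U$.

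The main obstacle I anticipate is in step (2), concretely the little dance $\cl_X(V \cap X) \subseteq \cl_Y(V) \subseteq U$: countable compactness of $X$ only locates accumulation points inside $X$, and without first replacing $U$ by a $V$ with $\cl_Y(V) \subseteq U$, the accumulation point $x$ would land in $\cl_X(U \cap X)$ rather than in $U$ itself. Once that bridge between $X$-closures and $Y$-closures is in hand, the rest is bookkeeping.
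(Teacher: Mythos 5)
Your proposal is correct and follows essentially the same route as the paper: part (1) from countable tightness at $z$, part (3) by shrinking $U$ to $V$ with $\cl_Y(V)\subseteq U$ via regularity and invoking (1), and part (2) by using countable compactness of $X$ inside $\cl_Y(V)$ to place a point of $\bigcap_n A_n$ in every neighborhood of $z$. The only cosmetic difference is that the paper runs (2) by contradiction with the nested-closed-sets formulation of countable compactness on $W=\cl_Y(V)\cap X$, while you argue directly with an accumulation point (tacitly using the standing $T_1$/Hausdorff convention so that discarding finitely many $a_k$ is harmless), which is the same idea.
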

\begin{proof}
The first statement is trivial as $t(z, Y)=\aleph_0$.  For the second,  suppose $\langle A_n:n<\omega\rangle$ is a decreasing sequence of conditions
 in $\mathbb{P}$, and let  $A:=\bigcap\{A_n:n<\omega\}$. The set $A$ is clearly a closed subset of $X$, so in light of (1) it suffices to show $z\in\cl_Y(A)$.

Suppose this fails, and let $U$ be an open neighborhood of $z$ in $Y$ for which
\begin{equation}
\label{eqn3}
A\cap U=\emptyset.
\end{equation}
Since $Y$ is regular, we can find an open neighborhood $V$ of $z$ with $\cl_Y(V)\subseteq U$.

Now let $W=\cl_Y(V)\cap X$.  Since $W$ is a closed subset of $X$ it is
 countably compact.  Furthermore, for each $n$ we know $A_n\cap W\neq\emptyset$
because $A_n$ meets $V$. Thus, $\langle A_n\cap V:n<\omega\rangle$ is a decreasing sequence of non-empty closed subsets of the countably compact
space $W$. We conclude that
\begin{equation}
\bigcap\{A_n\cap V:n<\omega\}=A\cap V\neq\emptyset,
\end{equation}
which contradicts (\ref{eqn3}).

For (3), given $U$ and a condition $A\in\mathbb{P}$, we can choose an open neighborhood $V$ of $z$ with $\cl_Y(V)\subseteq U$, and the
set  $A\cap\cl_Y(V)$ contains a condition in $\mathbb{P}[\vec{X}]$ by (1).
\end{proof}

We will be using forcings like $\mathbb{P}[\vec{X}]$ in models where {\sf CH} holds, and the next proposition shows why this cardinal arithmetic
assumption is useful:

\begin{proposition}
\label{usefulprop}
Assume the Continuum Hypothesis, and suppose $\vec{X}=\langle X, Y, z\rangle$ and $\mathbb{P}=\mathbb{P}[\vec{X}]$ are as above.
Let $G$ be a generic subset of $\mathbb{P}$.   In the extension $V[G]$, if we define
\begin{equation}
\mathcal{F}:=\{A\subseteq X:  \text{$A$ closed and $A\supseteq B$ for some $B\in G$ }\}
\end{equation}
then
\begin{enumerate}
\item $\mathcal{F}$ is a maximal free filter of closed subsets of $X$, and
\sk
\item for any open neighborhood $U$ of $z$ in $Y$, there is an $A\in\mathcal{F}$ with $A\subseteq U$.
\end{enumerate}
\end{proposition}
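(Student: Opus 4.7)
Part (2) is immediate from the previous proposition: $D^U=\{A\in\mathbb{P} : A\subseteq U\}$ is dense in $\mathbb{P}$, so $G\cap D^U$ is nonempty and any element of this intersection witnesses the claim.

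For part (1), the filter axioms are routine. $\mathcal{F}$ is upward closed by definition; given $A_1,A_2\in\mathcal{F}$ with witnesses $B_1,B_2\in G$, directedness of $G$ supplies a common $B\in G$ below both, so $A_1\cap A_2\supseteq B$; $X\in\mathcal{F}$ trivially; and $\mathcal{F}$ is proper since no condition is empty. For freeness, given any $x\in X$, regularity of $Y$ together with $z\ne x$ furnishes an open $V\ni z$ with $x\notin\cl_Y(V)$, and (2) produces $A\in\mathcal{F}$ with $A\subseteq V$, so $x\notin A$.

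The substantive content is maximality. Suppose $C\subseteq X$ is closed in $V[G]$ and meets every $A\in\mathcal{F}$. The first observation, which uses (2) and will be needed below, is that $z\in\cl_Y(B\cap C)$ for every $B\in G$: if this failed for some $B$, an open $U\ni z$ disjoint from $B\cap C$ would by (2) supply $A\in\mathcal{F}$ with $A\subseteq U$, and then $A\cap B\in\mathcal{F}$ would be disjoint from $C$, contradicting the hypothesis. Passing to a $\mathbb{P}$-name $\dot{C}$ and a $B_0\in G$ forcing the closedness and meeting hypotheses on $\dot{C}$, I would aim to show that the set of $B\in\mathbb{P}$ forcing $\dot{C}\in\dot{\mathcal{F}}$ is dense below $B_0$. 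Given $B_1\le B_0$, the construction combines countable tightness at $z$ in $Y$ (absolute between $V$ and $V[G]$, and applicable because $B_1\forces z\in\cl_Y(B_1\cap\dot{C})$) with the countable closedness of $\mathbb{P}$ via a fusion: iteratively select decreasing conditions below $B_1$ together with countable $V$-sets forced to lie in $\dot{C}$ and have $z$ in their $Y$-closure, arranged so that each sits inside the fusion limit. The closure in $X$ of the union of these countable pieces is a ground-model condition $B^*$, forced to be contained in $\dot{C}$, and sitting below the fusion-limit condition; genericity then places some such $B^*$ into $G$, whereupon $C\supseteq B^*\in G\subseteq\mathcal{F}$ gives $C\in\mathcal{F}$.

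The main obstacle is precisely that $C$ lives in $V[G]$, not $V$, so the direct dichotomy argument of the previous proposition (which required the closed set in question to live in the ground model) does not apply; one is forced to work at the level of the name $\dot{C}$. The subtle technical point is to arrange the fusion so that the countable witnesses selected at successive stages accumulate inside the limit condition, rather than drifting outside it; this is where the interplay between the countable closedness of $\mathbb{P}$ (producing the limit in $V$) and the countable tightness of $Y$ at $z$ (ensuring countable witnesses suffice) must be managed carefully.
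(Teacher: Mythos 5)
Parts (2), the routine filter axioms, freeness, and your reduction of maximality to a density statement about a name $\dot C$ are all fine and parallel the paper's intent. The genuine gap is in the central construction, the one step you leave as a ``fusion.'' You describe an $\omega$-length iteration in which at each stage you ``select countable $V$-sets forced to lie in $\dot C$ and have $z$ in their $Y$-closure.'' Producing even one such set below an arbitrary condition is precisely the content of the proposition: since $X$ is countably compact and $z\in\cl_Y(X)\setminus X$, the point $z$ has uncountable character in $Y$, so if at each of countably many steps you merely choose a point forced into $\dot C$ inside some neighborhood of $z$, nothing guarantees that $z$ lies in the $Y$-closure of the resulting countable set; and if you assume that a single step already yields a countable set accumulating at $z$ and forced into $\dot C$, you have assumed exactly what is to be proved. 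An $\omega$-length fusion cannot bridge this, because countably many neighborhood constraints can never capture the (uncountable-character) point $z$.

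The paper's proof closes this gap by using {\sf CH}, which is conspicuously absent from your maximality argument even though it is the hypothesis of the proposition: under {\sf CH} the point $z$ has character exactly $\aleph_1$ in $Y$, so one fixes an enumeration $\{U_\alpha:\alpha<\omega_1\}$ of a neighborhood base at $z$ and builds a decreasing $\omega_1$-sequence of conditions $p_\alpha$ (taking intersections at countable limits via countable closedness) together with points $x_\alpha$ such that $p_{\alpha+1}\Vdash x_\alpha\in p_\alpha\cap U_\alpha\cap\dot A$. Because every neighborhood of $z$ contains some $U_\alpha$, the full sequence of points accumulates at $z$, and countable tightness of $z$ in $Y$ --- applied in $V$, to a sequence constructed in the ground model --- yields a least countable limit $\alpha$ with $z\in\cl_Y\{x_\beta:\beta<\alpha\}$; the minimality of $\alpha$ gives the tail-accumulation needed to see that each $r\cap p_\beta$ (where $r=\cl_X\{x_\beta:\beta<\alpha\}$) is a condition, so that $q=\bigcap_{\beta<\alpha}(r\cap p_\beta)$ is a condition with $q\subseteq r$ and $q\Vdash r\subseteq\dot A$, hence $q\Vdash q\subseteq\dot A$, which is the dense statement you were after. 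Note also that your appeal to countable tightness at $z$ being ``absolute between $V$ and $V[G]$'' is both unjustified and unnecessary: the later forcing $\mathbb{P}[X,\mathcal{F}]$ in this very paper adds no new countable sequences yet forces $t(z,Y)>\aleph_0$, so preservation of tightness at $z$ cannot be taken for granted; the correct argument sidesteps this by invoking tightness only in the ground model, on the ground-model sequence $\langle x_\beta:\beta<\omega_1\rangle$.
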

\begin{proof}

As far as (1) goes, the only issue is whether $\mathcal{F}$ is maximal in $V[G]$.  It suffices to prove that whenever we have a condition
 $p\in\mathbb{P}$ and a $\mathbb{P}$-name $\dot A$ for which
\begin{equation}
\label{eqn4}
p\forces\dot A\text{ is a closed subset of $X$ that meets every member of $\dot G_{\mathbb{P}}$},
\end{equation}
we can find an extension $q$ of $p$ in $\mathbb{P}$ such that
\begin{equation}
q\forces q\subseteq \dot A.
\end{equation}

Since we have assumed {\sf CH}, we know that $Y$ has weight $\aleph_1$. The point $z$ cannot have a countable neighborhood base in $Y$ (as otherwise
$X$ would not be countably compact), so it follows that $z$ has character $\aleph_1$ in $Y$. Let $\{U_\alpha:\alpha<\omega_1\}$ enumerate
 a neighborhood base for $z$ in $Y$. Note that (\ref{eqn4}) shows us that
 $r\forces\dot A\cap r\neq\emptyset$ whenever $r\leq p$ in $\mathbb{P}$, and so we can construct a sequence $\langle p_\alpha:\alpha<\omega_1\rangle$ of
 conditions in $\mathbb{P}$ and sequence
 $\langle x_\alpha:\alpha<\omega_1\rangle$ of points in $X$ such that
 \begin{itemize}
 \item $p_0=p$
 \sk
 \item $p_\alpha=\bigcap\{p_\beta:\beta<\alpha\}$ for $\alpha$ limit
 \sk
 \item $p_{\alpha+1}\leq p_\alpha$
 \sk
 \item $p_{\alpha+1}\forces x_\alpha\in p_\alpha\cap U_\alpha\cap\dot A$.
 \sk
 \end{itemize}

 Since $t(z, Y)=\aleph_0$, we can choose $\alpha<\omega_1$ least for which
 \begin{equation}
 z\in\cl_Y\{x_\beta:\beta<\alpha\}.
 \end{equation}
 Note that this implies $\alpha$ is a limit ordinal, and
 \begin{equation}
 \beta<\alpha\Longrightarrow z\in\cl_Y\{x_\gamma:\beta<\gamma<\alpha\}.
 \end{equation}

Now define
 \begin{equation}
 r:=\cl_X\{x_\beta:\beta<\alpha\},
 \end{equation}
 clearly $r$ is a condition in $\mathbb{P}$. Moreover, given $\beta<\alpha$ we know
 \begin{equation}
 \{x_\gamma:\beta<\gamma<\alpha\}\subseteq r\cap p_\beta,
 \end{equation}
 and hence $r\cap p_\beta$ is a condition in $\mathbb{P}$ for each $\beta<\alpha$.

 Let us define
 \begin{equation}
 q:=\bigcap_{\beta<\alpha}r\cap p_\beta.
 \end{equation}
 We know that $q\in\mathbb{P}$ (see the proof that $\mathbb{P}$ is countably closed) and $q$ extends $p=p_0$ in $\mathbb{P}$. We have $q\leq p_{\beta+1}$ for each
 $\beta<\alpha$, and this means
 \begin{equation}
 q\forces\{x_\beta:\beta<\alpha\}\subseteq\dot A.
 \end{equation}
 By definition of $r$, we have
 \begin{equation}
 q\forces r\subseteq\dot A
 \end{equation}
 and since $q\subseteq r$, we conclude
 \begin{equation}
 q\forces q\subseteq \dot A
 \end{equation}
 as required.
\end{proof}

The following theorem summarizes our efforts in this section:

\begin{theorem}
\label{summary1}
Suppose {\sf CH} holds and $\vec{X} = \langle X, Y, z\rangle$ is a bad triple. Then
\begin{enumerate}
\item  the notion of forcing $\mathbb{P}[\vec{X}]$ is countably closed and of cardinality $\aleph_1$, and
\sk
\item if $G$ is a generic subset of $\mathbb{P}[\vec{X}]$, then in $V[G]$ there is an object $\mathcal{F}$ such that
\begin{enumerate}
\sk
\item $\mathcal{F}$ is a maximal free filter of of closed subsets of $X$,
\sk
\item $\mathcal{F}$ has a base of separable sets, and
\sk
\item for any open neighborhood $U$ of $z$ in $Y$, there is an $A\in\mathcal{F}$ with $A\subseteq U$.
\sk
\end{enumerate}
\end{enumerate}
\end{theorem}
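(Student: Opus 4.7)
The theorem is largely a bookkeeping result that packages together the content of the two preceding propositions, so the plan is to point at those results and verify the only genuinely new content, namely the cardinality bound and the separability of the base.

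For part (1): countable closure of $\mathbb{P}[\vec{X}]$ was already established in the proposition before last, so it remains only to bound the cardinality. Since the underlying set of $Y$ is $\omega_1$ and $X \subseteq Y$, every condition in $\mathbb{P}[\vec{X}]$ is a subset of $\omega_1$. Any condition $A$ is a separable closed subset of $X$, and so $A$ is determined by any countable dense $D \subseteq A$ via $A = \cl_X(D)$. The number of countable subsets of $\omega_1$ is $\omega_1^{\omega}$, and under {\sf CH} this equals $\aleph_1$. For the matching lower bound, the proof of Proposition \ref{usefulprop} observed that $z$ has character exactly $\aleph_1$ in $Y$; combining this with density of the sets $D^U$ produces an injection from a neighborhood base of $z$ into $\mathbb{P}[\vec{X}]$, so $|\mathbb{P}[\vec{X}]| = \aleph_1$.

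For part (2): we work in $V[G]$ and take $\mathcal{F}$ to be the set defined in Proposition \ref{usefulprop}, i.e.\ the collection of closed $A \subseteq X$ such that $B \subseteq A$ for some $B \in G$. Proposition \ref{usefulprop}(1) gives that $\mathcal{F}$ is a maximal free filter of closed subsets of $X$, yielding (2a). Proposition \ref{usefulprop}(2) gives (2c) directly. For (2b), observe that $G \subseteq \mathcal{F}$ by construction, and every element of $G$ is a condition in $\mathbb{P}[\vec{X}]$, hence is separable by the very definition of $\mathbb{P}[\vec{X}]$; since $\mathcal{F}$ is by construction the upward closure of $G$ among closed subsets of $X$, the set $G$ itself is a base of separable sets for $\mathcal{F}$.

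There is no real obstacle here beyond the cardinality count, which is where {\sf CH} is used for the only time in this particular theorem. Everything else is a direct invocation of prior results, so the ``proof'' is really just a sentence-by-sentence cross-reference accompanied by the observation that conditions being separable automatically transmits to $\mathcal{F}$ having a separable base.
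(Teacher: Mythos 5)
Your proposal is correct and follows the same route as the paper, which states this theorem with no separate proof: it is explicitly a summary of the two preceding propositions (countable closure and density of the sets $D^U$; the filter $\mathcal{F}$ and its maximality under {\sf CH}), with (2b) being immediate because $G$ itself is a base for $\mathcal{F}$ consisting of conditions, hence of separable closed sets. Your upper bound $|\mathbb{P}[\vec{X}]|\leq\aleph_1$ --- each condition equals $\cl_X(D)$ for a countable $D\subseteq\omega_1$, and $\aleph_1^{\aleph_0}=\aleph_1$ under {\sf CH} --- is exactly the computation the paper leaves implicit, and it is the only place {\sf CH} enters part (1).

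One small caveat: your lower-bound argument is not right as stated, since the assignment $U\mapsto A_U$ with $A_U\subseteq U$ coming from density of $D^U$ need not be injective (many neighborhoods can yield the same condition), so it does not literally embed a neighborhood base of $z$ into $\mathbb{P}[\vec{X}]$. This is harmless: for the later application ($\aleph_2$-p.i.c.\ via ``proper of size $\aleph_1$'') only $|\mathbb{P}[\vec{X}]|\leq\aleph_1$ matters, and if you do want equality it follows easily, e.g.\ every condition $A$ has a proper extension (pick $x\in A$, use regularity to find a neighborhood $U$ of $z$ with $x\notin\cl_Y(U)$, and apply density of $D^U$ below $A$), so countable closure lets you build a strictly decreasing $\omega_1$-chain of conditions.
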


\section{Destroying a counterexample}

In this section we examine the problem of ``killing'' a given bad triple. We show that bad triples can be destroyed by a reasonable notion of
forcing, as long as we have the Continuum Hypothesis available.

Our plan is as follows:  given a bad triple $\vec{X}=\langle X, Y, z\rangle$, we will first force with $\mathbb{P}[\vec{X}]$, and ask if $X$ is still hereditarily of countable $\pi$-character.  If the
answer is ``no'', then $\vec{X}$ is no longer a bad triple and we are done.  If the answer is ``yes'', then we
will do an additional forcing that achieves $t(z, Y)>\aleph_0$, and thus destroy the badness of $\vec{X}$ via a different route.

Most of this section will concentrate on the second notion of forcing mentioned above. This forcing was
isolated in previous work~\cite{EI17}, and we will review briefly its definition and salient properties.

\begin{definition}
\label{41}
We say that  a pair $\langle X, \mathcal{F}\rangle$ {\em satisfies the requirements of~\cite{EI17}} if
\begin{itemize}
\item $X$ is a countably compact, non-compact regular space satisfying $h\pi\chi(X)=\aleph_0$, and
\sk
\item $\mathcal{F}$ is a maximal free filter of closed subsets of $X$ with a separable base.
\sk
\end{itemize}
\end{definition}

We will use such $X$ and $\mathcal{F}$ to define the notion of forcing, but before we can do this we need
some more definitions lifted from earlier work:

\begin{definition}
Suppose $X$ and $\mathcal{F}$ are as above
\begin{enumerate}
\item A subset $A$ of $X$ is {\em large} if $A$ meets every set in $\mathcal{F}$. A set that is not large is called {\em small}.
\sk
\item A {\em promise} is a function whose domain is a large subset of $X$ such that for each $x\in\dom(f)$, $f(x)$ is an open neighborhood of $x$.
\sk
\end{enumerate}
\end{definition}

Given these definitions, we define the notion of forcing $\mathbb{P}[X,\mathcal{F}]$ as follows:

\begin{definition}
A forcing condition $p$ is a triple $(\sigma_p, A_p,\Phi_p)$ where
\begin{enumerate}
\item $\sigma_p$ is a one--to--one function from some countable ordinal into $X$, and we define
 $[p]:=\ran(\sigma_p)$
\item $A_p\in\f$
\item $\Phi_p$ is a countable set of promises (see below)
\item $\cl_X([p])\cap A_p=\emptyset$
\end{enumerate}
and a condition $q$ extends $p$ (written $q\leq p$) if
\begin{enumerate}
\setcounter{enumi}{4}
\item $\sigma_q\sups\sigma_p$
\item $A_q\subs A_p$
\item $\Phi_q\sups \Phi_p$
\item $[q]\setminus[p]\subs A_p$
\item if $f\in\Phi_p$, then the set
\begin{equation*}
Y(f, q, p):=\{x\in\dom f: [q]\setminus[p]\subs f(x)\}
\end{equation*}
is large, and $f\restr Y(f, q, p)\in\Phi_q$.
\end{enumerate}
\end{definition}

Most of the paper~\cite{EI17} is concerned with working out properties of the above notion of forcing. For example, the notion of forcing
is proper, and in the generic extension every countable sequence of ground model elements is already in the ground model.  We will be more systematic
describing the properties of $\mathbb{P}[X,\mathcal{F}]$ later, but for our immediate purposes we need the fact that it
adjoins a sequence $\langle x_\alpha:\alpha<\omega_1\rangle$ satisfying the following two conditions:
\begin{equation}
\cl_X(\{x_\beta:\beta<\alpha\})\notin\mathcal{F}\text{ for every $\alpha<\omega_1$},
\end{equation}
and
\begin{equation}
\text{for every }A\in \mathcal{F}\text{ there is an $\alpha$ such that }\{x_\beta:\alpha\leq\beta<\omega_1\}\subseteq A.
\end{equation}
The sequence just described is just $\bigcup\{\sigma_p:p\in G\}$, where $G$ is a generic subset of~$\mathbb{P}[X,\mathcal{F}]$.

The following lemma gives an additional property of $\mathbb{P}[X,\mathcal{F}]$ that will be needed for our argument:

\begin{lemma}
Suppose $X$ and $\mathcal{F}$ are as above, and $\mathcal{U}$ is an open cover of $X$. If $G$ is a generic subset of $\mathbb{P}[X,\mathcal{F}]$
then in $V[G]$  there is an $\alpha<\omega_1$ such that any
countable subset of $\{x_\beta:\alpha\leq\beta<\omega_1\}$ is covered by an element of $\mathcal{U}$
\end{lemma}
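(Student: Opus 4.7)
The plan is to exploit the promise mechanism, which was designed precisely for situations of this form. Using the axiom of choice in the ground model, I select for each $x\in X$ an element $U_x\in\mathcal{U}$ with $x\in U_x$, and define a function $f$ with $\dom(f)=X$ by $f(x)=U_x$. Since every $A\in\mathcal{F}$ is a non-empty subset of $X$, the domain $X$ is large, and each $f(x)$ is an open neighborhood of $x$, so $f$ is a promise in the sense of the forcing.

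Next I would verify that $D:=\{p\in\mathbb{P}:f\in\Phi_p\}$ is dense. Given any $p\in\mathbb{P}$, set $p':=(\sigma_p,A_p,\Phi_p\cup\{f\})$. Conditions (1)--(4) for $p'$ are inherited from $p$, and checking $p'\le p$ is routine: the only non-trivial clause is (9), and since $[p']\setminus[p]=\emptyset$ we have $Y(f',p',p)=\dom(f')$ for each $f'\in\Phi_p$, which is large by definition. By genericity, fix some $p^*\in G\cap D$ and set $\alpha:=\dom(\sigma_{p^*})<\omega_1$.

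I claim this $\alpha$ works. Work in $V[G]$ and let $S=\{y_n:n<\omega\}$ be a countable subset of $\{x_\beta:\alpha\le\beta<\omega_1\}$; write $y_n=x_{\beta_n}$ and set $\delta:=\sup_n\beta_n+1<\omega_1$. A standard density argument for $\mathbb{P}[X,\mathcal{F}]$ (for every $q\in\mathbb{P}$ and every $\gamma<\omega_1$ one can extend $q$ to a condition of length exceeding $\gamma$ by adding points drawn from $A_q$ and shrinking the filter component to stay disjoint from their closure) combined with the downward directedness of $G$ yields some $r\in G$ with $r\le p^*$ and $\dom(\sigma_r)>\delta$. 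Then each $y_n=x_{\beta_n}\in[r]$, and because $\beta_n\ge\alpha=\dom(\sigma_{p^*})$, we have $S\subseteq[r]\setminus[p^*]$. Since $f\in\Phi_{p^*}$ and $r\le p^*$, clause (9) of the order yields that $Y(f,r,p^*)=\{x\in X:[r]\setminus[p^*]\subseteq f(x)\}$ is large, hence non-empty; picking any $x$ in it gives $S\subseteq f(x)=U_x\in\mathcal{U}$.

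The only step that is not immediate from the definitions is the density of arbitrarily long conditions, but this belongs to the basic machinery of $\mathbb{P}[X,\mathcal{F}]$ developed in~\cite{EI17}. Apart from that, the entire proof is driven by packaging the open cover $\mathcal{U}$ as a single promise $f$ and then reading clause~(9) as exactly the statement that any increment $[r]\setminus[p^*]$ of the generic sequence past $\alpha$ is swallowed by a single element of $\mathcal{U}$.
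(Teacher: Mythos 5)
Your proposal is correct and follows essentially the same route as the paper: package the cover $\mathcal{U}$ as a single promise $f(x)=U_x$, densely add it to a condition in $G$, take $\alpha$ to be the length of that condition, and use clause~(9) together with the fact that conditions in $G$ reach arbitrarily large countable lengths to get a single $f(x)\in\mathcal{U}$ swallowing $[r]\setminus[p^*]$. The only cosmetic difference is that you bound a countable subset by the supremum of its indices while the paper reduces to the initial segments $\{x_\gamma:\alpha\le\gamma<\beta\}$, which is the same argument.
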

\begin{proof}
Suppose $\mathcal{U}$ is an open cover of $X$; our plan is to show that the set of conditions forcing the existence of such an $\alpha$ is dense. To that
end, let $p = (\sigma_p, A_p, \Phi_p)$ be a condition.  For each $x\in X$, choose $U_x\in\mathcal{U}$ with $x\in U_x$, and let
$f:X\rightarrow\mathcal{U}$ be the function mapping $x$ to $U_x$.  Note that $f$ is a promise, and
\begin{equation*}
q:=(\sigma_p, A_p, \Phi_p\cup\{f\})
\end{equation*}
is an extension of $p$ in $\mathbb{P}[X,\mathcal{F}]$.

Suppose now that $G$ is a generic subset of $\mathbb{P}$ containing $q$. We will work in the extension $V[G]$.
First, we set $\alpha = \dom(p)$. It suffices to show that whenever $\alpha<\beta<\omega_1$, the set $\{x_\gamma:\alpha\leq\gamma<\beta\}$
is covered by an element of $\mathcal{U}$.

Given such a $\beta$, we fix $r\in G$ with $\beta\subseteq\dom(r)$ (such an $r$ can be found by our discussion preceding the lemma).
Since $G$ is a filter on $\mathbb{P}[X,\mathcal{F}]$, we may assume that $r$ extends $q$. Since $f\in\Phi_q$, it follows that
\begin{equation*}
[r]\setminus[q]\subseteq f(x)\text{ for a large set of }x.
\end{equation*}
But $f(x)$ is always an element of $\mathcal{U}$, and
\begin{equation}
\{x_\gamma:\alpha\leq\gamma<\beta\}\subseteq [r]\setminus [p]=[r]\setminus [q],
\end{equation}
so we have what we need.
\end{proof}

Armed with the above, we show how $\mathbb{P}[X,\mathcal{F}]$ can be used to destroy certain bad triples:

\begin{lemma}
\label{usefullemma}
Suppose $\bar{X}=\langle X, Y, z\rangle$ is a bad triple. Further assume there is an $\mathcal{F}$ satisfying:
\begin{itemize}
\item $\mathcal{F}$ is a maximal free filter of closed subsets of $X$,
\sk
\item for any open neighborhood $U$ of $z$ in $Y$, there is an $A\in\mathcal{F}$ with $A\subseteq U$, and
\sk
\item $\mathcal{F}$ has a base of separable sets.
\sk
\end{itemize}
Let $G$ be a generic subset of $\mathbb{P}[X,\mathcal{F}]$.  Then
\begin{equation}
V[G]\models \text{``}t(z, Y)>\aleph_0\text{"}.
\end{equation}
\end{lemma}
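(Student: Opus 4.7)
The plan is to show that in $V[G]$, a suitable tail of the generic sequence $\langle x_\alpha : \alpha<\omega_1\rangle$ witnesses $t(z,Y)>\aleph_0$. More precisely, I aim to produce a countable ordinal $\alpha^*$ for which $A^*:=\{x_\alpha : \alpha^*\leq\alpha<\omega_1\}$ satisfies $z\in\cl_Y(A^*)$ while no countable $A_0\subseteq A^*$ satisfies $z\in\cl_Y(A_0)$.

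To build the right $\alpha^*$, I would first exploit the regularity (hence Hausdorffness) of $Y$ in the ground model: for each $x\in X$ choose disjoint open sets $V_x\ni x$ and $U_x\ni z$ in $Y$, and set $W_x:=V_x\cap X$. The family $\mathcal{U}:=\{W_x : x\in X\}$ then lies in $V$ and is an open cover of $X$, so applying the preceding open cover lemma to $\mathcal{U}$ produces the desired $\alpha^*<\omega_1$ in $V[G]$ with the property that every countable subset of $\{x_\beta:\alpha^*\leq\beta<\omega_1\}$ is contained in a single $W_x$.

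With $A^*$ defined as above, verifying $z\in\cl_Y(A^*)$ should be immediate from the hypotheses: any open neighborhood $U$ of $z$ contains some $F\in\mathcal{F}$, and some tail of the generic sequence lies inside $F$ (by the property of the generic sequence displayed just before the preceding lemma), so a sufficiently late $x_\beta$ lies in $A^*\cap U$. For the converse, given a countable $A_0\subseteq A^*$ we write $A_0=\{x_\beta:\beta\in S\}$ for some countable $S\subseteq[\alpha^*,\omega_1)$; the open cover lemma provides $x\in X$ with $A_0\subseteq W_x\subseteq V_x$, and then $U_x$ is an open neighborhood of $z$ disjoint from $V_x$ and hence from $A_0$. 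This gives $z\notin\cl_Y(A_0)$, completing the proof that $t(z,Y)>\aleph_0$ in $V[G]$.

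The main obstacle, as I see it, is the temptation to use the full sequence $\{x_\alpha:\alpha<\omega_1\}$ as the witness; with that choice one must control countable initial segments $\{x_\beta:\beta<\alpha^*\}$, about which the open cover lemma says nothing directly. Restricting to a tail $A^*$ cleanly circumvents this difficulty, and the hypothesis that every neighborhood of $z$ contains some $F\in\mathcal{F}$ (together with the tail-absorption property of the generic sequence) guarantees that we do not lose the property $z\in\cl_Y(A^*)$ when we pass to the tail.
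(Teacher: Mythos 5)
Your proposal is correct and follows essentially the same route as the paper's proof: apply the open-cover property of the generic sequence to a ground-model cover of $X$ by open sets whose closures miss $z$, and then the tail $\{x_\beta:\alpha^*\leq\beta<\omega_1\}$ has $z$ in its closure (via the filter hypothesis and the tail-absorption property) but $z$ is not in the closure of any of its countable subsets. The only cosmetic difference is that you build the cover via Hausdorff-style separation of $x$ from $z$ rather than quoting regularity directly, which yields the same cover.
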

\begin{proof}
We have already mentioned that in the generic extension there is a sequence $\langle x_\alpha:\alpha<\omega_1\rangle$ with the following properties:
\begin{enumerate}
\item for each $A\in\mathcal{F}$, there is an $\alpha$ such that $\{x_\beta:\alpha\leq\beta<\omega_1\}\subseteq U$, and
\sk
\item if $\mathcal{U}$ is an open cover of $X$ in $V$, then there is an $\alpha$ such that any countable subset of $\{x_\beta:\alpha\leq\beta<\omega_1\}$
is covered by an element of $\mathcal{U}$.
\sk
\end{enumerate}

Let $\mathcal{U}$ be (in $V$) an open cover of $X$ by sets $U$ with $z\notin\cl_Y(U)$ (this is possible by regularity of $Y$). In $V[G]$,
we can find  $\alpha<\omega_1$ such that any countable subset of $\{x_\beta:\alpha\leq\beta<\omega_1\}$ is covered by an element of $\mathcal{U}$.
Now define
\begin{equation*}
W:=\{x_\beta:\alpha\leq\beta<\omega_1\}.
\end{equation*}

By (1) above, it follows that $z\in \cl_Y(W)$.  By our choice of $\alpha$, $z$ is not in the closure of any countable subset of $W$.  Thus, $W$
witnesses that $t(z, Y)>\aleph_0$, as needed.
\end{proof}

Now suppose {\sf CH} holds, let $\vec{X}=\langle X, Y, z\rangle$ be a bad triple, and let $G$ be a generic subset of $\mathbb{P}[\vec{X}]$.
In $V[G]$, we know there is a filter $\mathcal{F}$ as in the conclusion of Theorem~\ref{summary1}.  If $\vec{X}$ is still a bad triple in $V[G]$,
 then the pair $\langle X, \mathcal{F}\rangle$ satisfies the assumptions of Lemma~\ref{usefullemma}, and hence forcing with
  $\mathbb{P}[X,\mathcal{F}]$ over $V[G]$ will give us $t(z, Y)>\aleph_0$. In any case, we have demonstrated that a bad triple can be destroyed
  by a ``reasonable'' notion of forcing, as long as {\sf CH} holds.

The following two definitions give us some notation to keep things organized.

\begin{definition}
\label{type}
Let $\vec{X}$ be a relevant triple.
\begin{itemize}
\item We say $\vec{X}$ is of Type~0 if $\vec{X}$ is not a bad triple.
\sk
\item We say $\vec{X}$ is of Type~1 if it is a bad triple, but after forcing with $\mathbb{P}[\vec{X}]$ it is no longer bad.
\sk
\item We say $\vec{X}$ is of Type~2 if it is of neither of the previous two types.
\end{itemize}
\end{definition}

\begin{definition}
Let $\vec{X}=\langle X, Y, z\rangle$ be a relevant triple.
\begin{itemize}
\item We let $\mathbb{P}^0[\vec{X}]$ be the trivial (one-element) notion of forcing.
\sk
\item $\mathbb{P}^1[\vec{X}]$ denotes the forcing $\mathbb{P}[\vec{X}]$.
\sk
\item If $\vec{X}$ is of Type~2, we let $\mathbb{P}^2[\vec{X}]$ denotes the forcing $\mathbb{P}[\vec{X}]*\mathbb{P}[X,\dot{\mathcal{F}}]$, as discussed
in the paragraph prior to Definition~\ref{type}.
\sk
\end{itemize}
\end{definition}

We end with the following summary of our work in this section:

\begin{theorem}
\label{kill}
Suppose the Continuum Hypothesis holds. Then any relevant triple $\vec{X}$ is of Type~n for some unique $n\in\{0, 1, 2\}$.
Furthermore, if $\vec{X}$ is of Type~$n$, then
\begin{equation*}
V^{\mathbb{P}^n[\vec{X}]}\models\text{``$\vec{X}$ is not a bad triple''.}
\end{equation*}
\end{theorem}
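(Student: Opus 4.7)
The plan is to verify the trichotomy directly and observe that uniqueness of $n$ is immediate from the definitions: Type~0, Type~1, and Type~2 partition the relevant triples because Type~1 requires ``bad in $V$'' (excluding Type~0) and ``not bad in $V^{\mathbb{P}[\vec{X}]}$'' (excluding Type~2 by fiat). So the only substantive content is verifying, case by case, that $V^{\mathbb{P}^n[\vec{X}]}\models$ ``$\vec{X}$ is not bad''.

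The cases $n=0$ and $n=1$ require almost nothing. For $n=0$, $\mathbb{P}^0[\vec{X}]$ is trivial, so $V^{\mathbb{P}^0[\vec{X}]} = V$, and Type~0 means $\vec{X}$ is already not bad in $V$. For $n=1$, the forcing is $\mathbb{P}[\vec{X}]$ itself, and Definition~\ref{type} was set up precisely so that $\vec{X}$ fails to be bad in $V^{\mathbb{P}[\vec{X}]}$.

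The real work is in the Type~2 case, and is essentially a composition of the results already proved. Let $G$ be $\mathbb{P}[\vec{X}]$-generic over $V$. Since $\vec{X}$ is Type~2, it remains a bad triple in $V[G]$. By Theorem~\ref{summary1}, $V[G]$ contains a maximal free filter $\mathcal{F}$ of closed subsets of $X$, with a separable base, such that every open neighborhood of $z$ in $Y$ contains a member of $\mathcal{F}$. Thus in $V[G]$ the pair $\langle X,\mathcal{F}\rangle$ satisfies all the hypotheses of Lemma~\ref{usefullemma}, so if $H$ is $\mathbb{P}[X,\mathcal{F}]$-generic over $V[G]$ then $V[G][H]\models t(z,Y)>\aleph_0$. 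Since $V[G][H] = V^{\mathbb{P}^2[\vec{X}]}$, the triple $\vec{X}$ is no longer bad there.

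The main obstacle — though it amounts to careful bookkeeping rather than a new idea — is checking that the full hypothesis list of Definition~\ref{41} and Lemma~\ref{usefullemma} is met in $V[G]$. Regularity, countable tightness at $z$, and $h\pi\chi(X)=\aleph_0$ are part of badness and so are inherited from $V[G]\models$ ``$\vec{X}$ is bad''. Countable compactness of $X$ survives to $V[G]$ because $\mathbb{P}[\vec{X}]$ is countably closed, so no new $\omega$-sequences from $X$ appear in the extension. Non-compactness of $X$ follows from the Hausdorffness of $Y$ together with $z\in\cl_Y(X)\setminus X$: a compact $X$ would be closed in $Y$, contradicting $z\notin X$. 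The properties of $\mathcal{F}$ required by Lemma~\ref{usefullemma} are exactly what Theorem~\ref{summary1} delivers. Once this verification is in hand, Lemma~\ref{usefullemma} finishes the argument and hence the theorem.
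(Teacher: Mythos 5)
Your proposal is correct and matches the paper's own argument: the paper's ``proof'' of Theorem~\ref{kill} is exactly the discussion preceding Definition~\ref{type}, which handles Types~0 and~1 trivially and, for Type~2, composes Theorem~\ref{summary1} with Lemma~\ref{usefullemma} just as you do. Your extra bookkeeping (countable compactness preserved by countable closure, non-compactness of $X$ from $z\in\cl_Y(X)\setminus X$) is a sound filling-in of details the paper leaves implicit.
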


\section{Putting it all together}

In this section we will build our model of {\sf ZFC + CH + $\circledast$}.  Our plan is  standard: we assume {\sf GCH} in the ground model
and build a countable support iteration of length $\omega_2$ where at each stage we treat a relevant triple using Theorem~\ref{kill}.

We start with an {\em ad hoc} definition:

\begin{definition}
A notion of forcing $\mathbb{P}$ is {\em admissible} if $\mathbb{P}$ is trivial, or of the form $\mathbb{P}[\vec{X}]$ for some bad triple $\vec{X}$,
or of the form $\mathbb{P}[X,\mathcal{F}]$ where $|X|=\aleph_1$ and $\langle X, \mathcal{F}\rangle$ is as in Definition~\ref{41}.
A countable support iteration $\bar{\mathbb{P}}=\langle \text{ $\mathbb{P}_\alpha$, $\dot{\mathbb{Q}}_\beta:\alpha\leq\omega_2$, $\beta<\omega_2$}\rangle$
is {\em admissible} if for each $\alpha<\omega_2$,
\begin{equation*}
\forces_{\alpha}\text{``$\dot Q_\alpha$ is admissible''}.
\end{equation*}
\end{definition}

Our goal is to show that there is an admissible iteration $\bar{\mathbb{P}}$ such that
\begin{equation}
\forces_{\omega_2}\text{``{\sf CH } + $\circledast$''}.
\end{equation}

Most of the results in this section will consist of pointers back to results already in the literature. The conclusions we draw are simply stated,
but the results we need to obtain these conclusions often involve very technical concepts. Our plan is to treat much of the earlier
work as a black box, as we do in the next proposition.

\begin{proposition}
Admissible iterations satisfy the assumptions of Theorem~4 in~\cite{EI15}. Thus, the limit $\mathbb{P}_{\omega_2}$ is totally proper, and
for any $\alpha<\omega_2$ the quotient forcing $\mathbb{P}_{\omega_2}/\dot{G}_\alpha$ is totally proper.
\end{proposition}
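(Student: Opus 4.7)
The plan is to reduce everything to verifying that each admissible iterand falls into the class of forcings handled by Theorem~4 of [EI15]. That theorem is a variant of Shelah's countable support iteration theorem: given a countable support iteration in which each $\dot{\mathbb{Q}}_\alpha$ is forced to be totally proper together with an appropriate completeness-system witness (the kind of $\mathcal D$-completeness used throughout the EI series), the limit and every quotient are totally proper. So the work is to check the completeness hypothesis for each of the three kinds of admissible iterand, after which Theorem~4 applies verbatim.

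The first two cases are quick. The trivial forcing is vacuously fine. For $\mathbb{P}[\vec X]$ with $\vec X$ a bad triple, Theorem~\ref{summary1} gives that $\mathbb{P}[\vec X]$ is countably closed and of cardinality $\aleph_1$; countably closed forcings are totally proper, and the required completeness-system witness is given by the trivial recipe (any countable decreasing sequence of conditions has a lower bound, so no generic choices must be coded into a completeness system at all).

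The serious case is $\mathbb{P}[X,\mathcal F]$, and here I would simply appeal to~[EI17], which was written precisely to isolate this notion of forcing and verify that it is totally proper, indeed $\mathcal D$-complete in the sense needed by~[EI15]. The promise structure built into the conditions is designed exactly for this: given a countable descending sequence of conditions chosen over a suitable elementary submodel, the accumulated promises dictate how to amalgamate the $\sigma_p$'s and the filter elements $A_p$ so that a canonical lower bound is forced to exist. Translating that construction into the language of a completeness system is the content of [EI17] and I would quote it as a black box.

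The step I expect to be the main obstacle is the two-step iteration $\mathbb{P}[\vec X] * \mathbb{P}[X,\dot{\mathcal F}]$ that arises for Type~2 triples. Here two things must be checked. First, in $V^{\mathbb{P}[\vec X]}$ the pair $\langle X,\dot{\mathcal F}\rangle$ really satisfies the hypotheses of Definition~\ref{41}: this uses that $\mathbb{P}[\vec X]$ is countably closed and hence adds no new countable subsets of $X$, so countable compactness of $X$, hereditary countable $\pi$-character, and the separable-base property of $\dot{\mathcal F}$ (which is guaranteed by Theorem~\ref{summary1}(2)(b)) all survive the forcing. Second, one must check that the two-step composition itself lies in the class of forcings to which Theorem~4 of~[EI15] applies, i.e.\ that the completeness hypothesis is preserved under composition of a countably closed forcing with an iterand of the form $\mathbb{P}[X,\mathcal F]$. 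This closure of the completeness-system framework under two-step iteration is a standard and well-documented feature of the Shelah/EI machinery, and once it is invoked the verification is complete and Theorem~4 of~[EI15] delivers total properness of $\mathbb{P}_{\omega_2}$ and of all quotients $\mathbb{P}_{\omega_2}/\dot G_\alpha$.
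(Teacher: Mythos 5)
Your overall strategy is the same as the paper's: reduce the proposition to checking that each admissible iterand lies in the class covered by Theorem~4 of~\cite{EI15}, handle the trivial and countably closed cases (i.e.\ $\mathbb{P}[\vec X]$) trivially, and quote~\cite{EI17} as a black box for $\mathbb{P}[X,\mathcal F]$; the paper does exactly this. Three points of divergence are worth flagging. First, you describe the hypotheses of Theorem~4 of~\cite{EI15} as ``totally proper plus a completeness-system ($\mathcal D$-completeness) witness,'' whereas the theorem actually asks for three properties of each iterand --- total properness, weak $<\omega_1$-properness, and a technical iteration condition --- and it is precisely these three that sections~4--6 of~\cite{EI17} verify for $\mathbb{P}[X,\mathcal F]$ and that countably closed forcings satisfy trivially; since you are black-boxing the citation anyway this does not break the argument, but the framing should match what is actually being quoted. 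Second, the quotient statement (total properness of $\mathbb{P}_{\omega_2}/\dot G_\alpha$) is not delivered ``verbatim'' by Theorem~4; the paper obtains it from a separate result, Proposition~6.13 of~\cite{EI15}. Third, the case you single out as the main obstacle --- the two-step composition $\mathbb{P}[\vec X]\ast\mathbb{P}[X,\dot{\mathcal F}]$ --- is not needed for this proposition at all: by definition an admissible iterand is trivial, or of the form $\mathbb{P}[\vec X]$, or of the form $\mathbb{P}[X,\mathcal F]$ with $\langle X,\mathcal F\rangle$ as in Definition~\ref{41}, so the composed forcing never appears as a single iterand here (it is relevant only to how the final construction arranges its stages, where it can be unfolded into two consecutive iterands). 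Your extra verification is harmless, and your observation that the Definition~\ref{41} hypotheses survive the countably closed first step is a reasonable sanity check, but it is not part of what this statement requires.
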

\begin{proof}
Let $\bar{\mathbb{P}}$ be an admissible iteration. Theorem~4 of~\cite{EI15} requires the iterands of $\bar{\mathbb{P}}$ to have three
properties:  they each must be totally proper, weakly $<\omega_1$-proper, and satisfy a complicated ``iteration condition''. We do not need the details of
these definitions here because sections 4, 5, and 6 of~\cite{EI17} are devoted to showing that forcings of the form $\mathbb{P}[X,\mathcal{F}]$ satisfy these three conditions, and
countably closed forcings satisfy them in a trivial way. The conclusion of Theorem~4 of~\cite{EI15} is that $\mathbb{P}_{\omega_2}$ is totally proper,
and the result regarding quotient forcing follows from Proposition~6.13 of the same paper.
\end{proof}

The quotient forcing portion of the preceding proposition is actually saying something quite simple: if $G$ is a generic subset of $\mathbb{P}_{\omega_2}$
and $G_\alpha$ is the canonical generic subset of $\mathbb{P}_\alpha$ obtained from $G$, then $V[G]$ is obtained from $V[G_\alpha]$ by forcing with a
totally proper notion of forcing.

\begin{corollary}
\label{cor1}
Let $\bar{\mathbb{P}}$ be an admissible iteration, and let $G$ be a generic in subset of $\mathbb{P}_{\omega_2}$.
\begin{enumerate}
\item If {\sf CH} holds in $V$, it continues to hold in $V[G]$.
\sk
\item Suppose $\alpha<\omega_2$ and $\vec{X}$ is a relevant triple in $V[G_\alpha]$. Then $\vec{X}$ is a relevant triple in $V[G]$.  Furthermore, if
$\vec{X}$ is not a bad triple in $V[G_\alpha]$, then it is not a bad triple in $V[G]$.
\end{enumerate}
\end{corollary}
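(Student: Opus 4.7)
The plan is to deduce both clauses from the two standard consequences of total properness recorded in the preceding proposition: a totally proper notion of forcing adds no new countable sets of ground-model objects, and it preserves $\aleph_1$. Applied at the top, $\mathbb{P}_{\omega_2}$ has these properties relative to $V$; applied to the quotient, $V[G]$ contains no new countable subsets of $V[G_\alpha]$ and shares its $\aleph_1$ with $V[G_\alpha]$.

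For clause (1) I would simply note that $\mathcal{P}(\omega)^{V[G]} = \mathcal{P}(\omega)^V$ (no new reals), a set of cardinality $\aleph_1^V = \aleph_1^{V[G]}$ in both models by CH together with preservation of $\aleph_1$, yielding $(2^{\aleph_0})^{V[G]} = \aleph_1^{V[G]}$.

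For the first half of (2), fix a relevant triple $\vec{X} = \langle X, Y, z\rangle$ in $V[G_\alpha]$. The topologies on $X$ and $Y$, the distinguished point $z$, and the underlying set $\omega_1$ all transfer unchanged to $V[G]$ (the last using preservation of $\aleph_1$), so regularity, separability, the decomposition $Y = X \cup \{z\}$, and non-isolation of $z$ are trivially inherited. The only nontrivial clause is countable compactness of $X$: here I would take a countable $A \subseteq X$ in $V[G]$, invoke total properness of the quotient to place $A$ already in $V[G_\alpha]$, find an accumulation point of $A$ in $X$ there, and observe that this point remains an accumulation point in $V[G]$ because the topology has not changed.

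For the ``not bad'' half, failure of badness in $V[G_\alpha]$ means either $h\pi\chi(X)>\aleph_0$ (witnessed by some $Y_0 \subseteq X$ and a point of $Y_0$ with no countable local $\pi$-base) or $t(z,Y)>\aleph_0$ (witnessed by some $A\subseteq Y$ with $z\in\cl_Y(A)$ but $z\notin\cl_Y(A_0)$ for every countable $A_0\subseteq A$). In either case I would argue that the same witness continues to work in $V[G]$: any candidate countable $\pi$-base, or countable $A_0 \subseteq A$ with $z\in\cl_Y(A_0)$, appearing in $V[G]$ would itself be a countable set of objects from $V[G_\alpha]$ and so would already lie in $V[G_\alpha]$, contradicting the witness there. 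The only step requiring any care---and it is not a genuine obstacle---is confirming that total properness of the quotient over $V[G_\alpha]$ delivers the ``no new countable subsets'' fact I am leaning on; this is a standard consequence of total properness, routinely used in the earlier papers being cited.
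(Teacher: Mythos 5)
Your proposal is correct and follows essentially the same route as the paper: clause (1) from total properness of $\mathbb{P}_{\omega_2}$, and clause (2) by noting that countable compactness, countable $\pi$-character witnesses, and countable tightness witnesses can only be disturbed by adding a new countable sequence of $V[G_\alpha]$-objects, which the totally proper quotient cannot do. Your write-up is just a slightly more explicit version of the paper's argument (e.g.\ spelling out the reduction of candidate $\pi$-bases and countable subsets to ground-model countable sequences).
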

\begin{proof}
The first statement follows since $\mathbb{P}_{\omega_2}$ is totally proper.  For the second, let $\vec{X}=\langle X, Y, z\rangle$ be a
relevant triple in $V[G_\alpha]$. Since $V[G]$ is obtained from $V[G_\alpha]$ through forcing with a totally proper notion of forcing,
we know  $X$ has the same countable subsets in both $V[G_\alpha]$ and $V[G]$, and hence $X$ remains countably compact in $V[G]$.  The other aspects
of relevance are preserved automatically.

Suppose now that $\vec{X}$ is not a bad triple in $V[G_\alpha]$.  This can happen by one of two ways: either $h\pi\chi(X)>\aleph_0$, or $t(z, Y)>\aleph_0$.
Destroying either of these conditions would require adding a new countable sequence of elements of $V[G_\alpha]$ --- either a countable sequence
of basic open sets of $X$ in the first case, or a new  countable of points of $X$ in the second. This cannot happen as $V[G]$ is obtained from $V[G_\alpha]$
via a totally proper notion of forcing.
\end{proof}

The above corollary tells us that if a bad triple is destroyed at some stage of an admissible iteration, then it stays dead. Clearly this will be an
important ingredient in our construction, but we need to be able to argue that every relevant triple in the final extension has had its ``badness'' destroyed
at some stage along the way.  To do this, we need to show that the limit forcing $\mathbb{P}_{\omega_2}$ has the $\aleph_2$-chain condition.

Again, this will follow after some appeals to the literature.  In this case, we will need to take advantage of previous work on the so-called
{\em $\aleph_2$-properness isomorphism condition} (abbreviated $\aleph_2$-p.i.c.).
We will not give the (complicated) definition here, as all we need to know is encapsulated in the following three propositions:

\begin{proposition}
\label{blackbox1}
 Suppose $\bar{P}=\langle \mathbb{P}_\alpha,\dot{\mathbb{Q}}_\beta:\alpha\leq\omega_2, \beta<\omega_2\rangle$ is a countable support iteration
such that
\begin{equation*}
\forces_\alpha\text{``$\dot{\mathbb{Q}}_\alpha$ satisfies the $\aleph_2$-p.i.c.''}.
\end{equation*}
Then
\begin{enumerate}
\item  $\mathbb{P}_\alpha$ satisfies the $\aleph_2$-p.i.c for each $\alpha<\omega_2$, and
\sk
\item if $2^{\aleph_0}=\aleph_1$ then $\mathbb{P}_{\omega_2}$ satisfies the $\aleph_2$-c.c.
\end{enumerate}
\end{proposition}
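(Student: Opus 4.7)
The plan is to invoke Shelah's original development of the $\aleph_2$-properness isomorphism condition; both assertions appear in Chapter VIII of Shelah's \emph{Proper and Improper Forcing}, and my sketch here mostly consists of pointing to the appropriate ingredients.

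For (1), the argument is by induction on $\alpha \leq \omega_2$. At successor stages $\alpha=\beta+1$ I would take a candidate isomorphism $h$ between countable elementary submodels $N, N'$ of some $H(\chi)$, together with a condition $p\in\mathbb{P}_\alpha\cap N$. The induction hypothesis on $\mathbb{P}_\beta$ produces an extension $q\restr\beta$ that is simultaneously $(N,\mathbb{P}_\beta)$-generic and $(N',\mathbb{P}_\beta)$-generic and forces $h$ to lift to an isomorphism of the corresponding extensions. In the $\mathbb{P}_\beta$-extension I then apply the assumed $\aleph_2$-p.i.c.\ for $\dot{\mathbb{Q}}_\beta$ to extend at coordinate $\beta$. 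At limit stages of countable cofinality, the countable supports of conditions allow the required isomorphism to be assembled as a union of countably many approximations, mirroring the standard preservation argument for properness; at limits of uncountable cofinality, any $p\in\mathbb{P}_\alpha\cap N$ has support contained in $N$, reducing the situation to a smaller stage.

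For (2), assume CH and suppose $\{p_\xi:\xi<\omega_2\}$ are conditions in $\mathbb{P}_{\omega_2}$. For each $\xi$ I would choose a countable elementary submodel $N_\xi$ of a suitable $H(\chi)$ containing $\bar{\mathbb{P}}$ and $p_\xi$. The supports of the $p_\xi$ form countable subsets of $\omega_2$, so the $\Delta$-system lemma gives an $\aleph_2$-sized subfamily with a common root. Under CH there are only $\aleph_1$ isomorphism types of the relevant countable structures (carrying the condition as extra data), so after a further pigeonhole we find $\xi\neq\eta$ and an isomorphism $h:N_\xi\to N_\eta$ satisfying the hypotheses built into the $\aleph_2$-p.i.c., with $h(p_\xi)=p_\eta$. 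Applying (1) to $\mathbb{P}_{\omega_2}$ then produces a common extension of $p_\xi$ and $p_\eta$, witnessing compatibility.

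The main obstacle is really (1): the limit-stage bookkeeping of isomorphisms across transfinite cofinalities is the technical heart of Shelah's argument, and is precisely what the p.i.c.\ is engineered to accommodate. Given (1), part (2) is a fairly routine $\Delta$-system-and-pigeonhole argument, and is packaged here so that it can be combined with the total-properness results of the previous proposition to rule out $\aleph_2$-sized antichains in the final model.
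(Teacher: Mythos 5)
Your approach is the same as the paper's: the paper proves this proposition simply by citing Shelah's Lemma~2.4 (\emph{Proper and Improper Forcing}, Ch.~VIII, p.~410) and Abraham's Handbook treatment, and your sketch is essentially a pointer to that same development, with the standard induction for (1) and the CH-plus-$\Delta$-system counting for (2). One step in your (2), however, is not right as written: you finish by ``applying (1) to $\mathbb{P}_{\omega_2}$,'' but (1) asserts the $\aleph_2$-p.i.c.\ only for $\mathbb{P}_\alpha$ with $\alpha<\omega_2$, and indeed the p.i.c.\ itself is not claimed (and is not needed) at length $\omega_2$ --- this is exactly why the statement separates (1) from (2). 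The standard repair is to use that conditions have countable support and $\cf(\omega_2)>\omega$, so the supports of the two conditions $p_\xi$, $p_\eta$ produced by your pigeonhole are contained in some $\beta<\omega_2$; viewing them as conditions of $\mathbb{P}_\beta$ and arranging that the isomorphism $h:N_\xi\to N_\eta$ sends $\beta$-data appropriately, the $\aleph_2$-p.i.c.\ of $\mathbb{P}_\beta$ from (1) yields $q\leq p_\xi$ which is $(N_\xi,\mathbb{P}_\beta)$-generic and forces $h(p_\xi)=p_\eta$ into the generic filter, hence $q$ is a common extension witnessing compatibility. With that adjustment your outline matches the cited argument.
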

\begin{proof}
This is a special case of Lemma~2.4 on page~410 of~\cite{pif}.  Abraham also gives a nice treatment of this in Section~5.4 of~\cite{proper}.
\end{proof}

\begin{proposition}
\label{blackbox3}
Let $\mathbb{P}$ be an admissible notion of forcing.  If {\sf CH} holds, then $\mathbb{P}$ satisfies the $\aleph_2$-p.i.c.
\end{proposition}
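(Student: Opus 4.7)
The plan is to split along the three forms of admissibility. The trivial case is vacuous, so it suffices to handle $\mathbb{P}[\vec X]$ for a bad triple $\vec X$ and $\mathbb{P}[X,\mathcal F]$ for $\langle X,\mathcal F\rangle$ as in Definition~\ref{41}. My first observation, using CH, is that each of these forcings has cardinality at most $\aleph_1$: conditions in $\mathbb{P}[\vec X]$ are separable closed subsets of $X$, each determined by a countable dense subset of itself, and $|X|^{\aleph_0}=\aleph_1$ under CH; conditions in $\mathbb{P}[X,\mathcal F]$ are triples whose three slots are respectively a countable function into an $\aleph_1$-sized set $X$, a member of a separable base of $\mathcal F$ (of size $\leq\aleph_1$), and a countable set of promises (of which there are only $\aleph_1$-many under CH).

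For $\mathbb{P}[\vec X]$ the verification is standard. By Theorem~\ref{summary1} the forcing is countably closed, and any countably closed forcing of size $\leq\aleph_1$ satisfies the $\aleph_2$-p.i.c.\ under CH (see Lemma~2.4 of~\cite{pif} or Section~5.4 of~\cite{proper}). Concretely: given countable $M,N\prec H(\chi)$, an isomorphism $h\colon M\to N$ fixing $\mathbb{P}$ and the usual ambient parameters, and conditions $p\in M\cap\mathbb{P}$ with counterpart $h(p)$, one enumerates the dense subsets of $\mathbb{P}$ that lie in $M$ and constructs a descending $\omega$-sequence of conditions meeting each such set on the $M$-side together with its $h$-image on the $N$-side, then takes the meet by countable closure; the meet serves as the matched master condition.

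For $\mathbb{P}[X,\mathcal F]$ the argument adapts the fusion already developed in~\cite{EI17}. There, total properness and weak $<\omega_1$-properness are proved through a master-condition construction which, given any condition and any countable elementary submodel, produces an extension that is $(M,\mathbb{P})$-generic while preserving largeness of all promises currently loaded into the condition. Running that construction in tandem on the $M$- and $N$-sides, with every choice on the $N$-side obtained by applying $h$ to the corresponding choice on the $M$-side, yields matched generic master conditions for the isomorphic pair, which is exactly what the $\aleph_2$-p.i.c.\ demands.

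The main obstacle is the promise-amalgamation step in the $\mathbb{P}[X,\mathcal F]$ case: one must argue that the promises carried on the two sides, once combined, still have a large common domain, so that the pooled condition really belongs to $\mathbb{P}[X,\mathcal F]$. This is handled as in Sections~4--6 of~\cite{EI17}: the promises active in any condition lying in $M\cup N$ belong to $M\cap N$ and are literally fixed by $h$, so the two-sided amalgamation reduces to the one-sided amalgamation already treated there, and the construction goes through.
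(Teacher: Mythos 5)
Your treatment of the trivial forcing and of $\mathbb{P}[\vec X]$ is correct and is essentially the paper's own argument: under {\sf CH} the poset $\mathbb{P}[\vec X]$ has cardinality $\aleph_1$, it is proper (indeed countably closed), and a proper forcing of size $\aleph_1$ satisfies the $\aleph_2$-p.i.c.\ (Lemma~2.5 of~\cite{pif}, which is exactly what the paper cites). The problem is the case $\mathbb{P}[X,\mathcal F]$, which the paper does not argue directly at all: it invokes Theorem~7.2 of~\cite{EI17}, where the $\aleph_2$-p.i.c.\ for this forcing is proved.

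Two concrete flaws in your version of that case. First, the cardinality count is wrong: a promise is a function whose domain is a \emph{large} (hence in general uncountable) subset of $X$, assigning to each of its points an open neighborhood, and under {\sf CH} alone there can be $2^{\aleph_1}>\aleph_1$ such functions; so $\mathbb{P}[X,\mathcal F]$ need not have size $\aleph_1$, which is precisely why the size-$\aleph_1$ shortcut that works for $\mathbb{P}[\vec X]$ is unavailable here. Second, and more seriously, your two-sided fusion sketch hinges on the claim that ``the promises active in any condition lying in $M\cup N$ belong to $M\cap N$ and are literally fixed by $h$.'' That is false in general: a condition in $M$ carries promises that are elements of $M$ but need not be elements of $N$, and since a promise is an uncountable object (not a subset of $M$), $h$ typically sends it to a \emph{different} promise with a different large domain. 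Hence the two-sided amalgamation does not reduce to the one-sided master-condition construction of~\cite{EI17}; one must actually produce a single $q\leq p$ that is $(M,\mathbb{P})$-generic and forces $h''(\dot G\cap M)\subseteq\dot G$, which requires simultaneously coordinating the $\sigma$-parts, the $\mathcal F$-parts, and the largeness requirements of promises coming from both models. That is the genuine technical content of Theorem~7.2 of~\cite{EI17}, and your sketch does not supply it; as written, the proposal has a gap exactly where the paper appeals to that theorem.
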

\begin{proof}
Lemma~2.5 on page~411 of~\cite{pif} tells us that a proper notion of forcing of size $\aleph_1$ satisfies the $\aleph_2$-p.i.c.
Clearly nothing more needs to be said for the trivial forcing, and under {\sf CH} if $\vec{X}$ is a bad triple, then  $\mathbb{P}[\vec{X}]$ is of cardinality~$\aleph_1$
and hence satisfies the $\aleph_2$-p.i.c. as well. For $\mathbb{P}$  of the form $\mathbb{P}[X,\mathcal{F}]$ with $|X|=\aleph_1$, we get the $\aleph_2$-p.i.c. by way of Theorem~7.2 of~\cite{EI17}.
\end{proof}

Now we can put the pieces together to obtain the following lemma:

\begin{lemma}
Suppose $\bar{\mathbb{P}}$ is an admissible iteration, and $2^{\aleph_0}=\aleph_1$ and $2^{\aleph_1}=\aleph_2$ in the ground model. Then
\begin{itemize}
\item $\mathbb{P}_{\omega_2}$ satisfies the $\aleph_2$-chain condition, and
\sk
\item $\forces_{\omega_2}\text{``}2^{\aleph_1}=\aleph_2\text{''}$.
\end{itemize}
\end{lemma}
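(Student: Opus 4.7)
The plan is to prove the two bullet points together: CH persists along the iteration by a routine induction, which in turn supplies the per-stage $\aleph_2$-p.i.c., yielding the $\aleph_2$-c.c., and then a counting-of-names calculation yields $2^{\aleph_1}=\aleph_2$ in the extension.

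I would begin by showing by induction on $\alpha\leq\omega_2$ that $V^{\mathbb{P}_\alpha}\models{\sf CH}$. For each $\alpha$, the truncation $\bar{\mathbb{P}}\restr\alpha$ is itself an admissible iteration, so the preceding proposition (applied at length $\alpha$) gives that $\mathbb{P}_\alpha$ is totally proper over $V$. Totally proper forcing adds no new countable sequences --- in particular no new reals --- and preserves $\aleph_1$, so CH transfers from $V$ to $V^{\mathbb{P}_\alpha}$. With CH at every stage in hand, Proposition~\ref{blackbox3} applied in $V^{\mathbb{P}_\alpha}$ yields $\forces_\alpha$``$\dot{\mathbb{Q}}_\alpha$ satisfies the $\aleph_2$-p.i.c.''\ for each $\alpha<\omega_2$, and the ground-model hypothesis $2^{\aleph_0}=\aleph_1$ then permits Proposition~\ref{blackbox1}(2) to deliver the $\aleph_2$-c.c.\ for $\mathbb{P}_{\omega_2}$.

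For the cardinal arithmetic in the final model, I would first bound $|\mathbb{P}_{\omega_2}|$. Each admissible iterand has size at most $\aleph_2$ in its intermediate model (the $\mathbb{P}[\vec{X}]$ case has size $\aleph_1$ by Theorem~\ref{summary1}, while the $\mathbb{P}[X,\mathcal{F}]$ case with $|X|=\aleph_1$ is bounded by $2^{\aleph_1}$, which I would control by strengthening the inductive hypothesis to carry $2^{\aleph_1}=\aleph_2$ at every stage as well). Standard book-keeping for countable-support iterations of length $\omega_2$ under $2^{\aleph_1}=\aleph_2$ then yields $|\mathbb{P}_{\omega_2}|\leq\aleph_2$. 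Using the $\aleph_2$-c.c., every subset of $\omega_1$ in the extension has a nice name consisting of $\aleph_1$ antichains of size $<\aleph_2$, so the total number of such names is at most $|\mathbb{P}_{\omega_2}|^{\aleph_1} = \aleph_2^{\aleph_1} = 2^{\aleph_1} = \aleph_2$. Hence $V^{\mathbb{P}_{\omega_2}}\models 2^{\aleph_1}\leq\aleph_2$, and cardinal preservation (via the $\aleph_2$-c.c.\ together with total properness) gives the reverse inequality.

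The main obstacle will be the cardinal bookkeeping: the induction of the first step really needs to be strengthened to carry both ${\sf CH}$ and $2^{\aleph_1}=\aleph_2$ at every intermediate stage, and checking that this strengthened hypothesis survives each successor step --- counting new subsets of $\omega_1$ added by an iterand of size $\leq\aleph_2$ using the per-stage $\aleph_2$-p.i.c.\ --- is the delicate point. The CH half of the induction, by contrast, is automatic from the preceding proposition applied to initial segments.
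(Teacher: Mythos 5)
Your proposal is correct and follows essentially the same route as the paper: CH at each stage gives the $\aleph_2$-p.i.c.\ of the iterands via Proposition~\ref{blackbox3}, Proposition~\ref{blackbox1} with ground-model CH gives the $\aleph_2$-c.c.\ of $\mathbb{P}_{\omega_2}$, and $2^{\aleph_1}=\aleph_2$ is preserved by the standard counting argument using the chain condition and the inductive bound of $\aleph_2$ on the size of each iterand. Your write-up simply makes explicit (nice names, carrying $2^{\aleph_1}=\aleph_2$ through the induction) what the paper dispatches as ``standard reasons.''
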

\begin{proof}
We know that our iteration satisfies the assumptions of Proposition~\ref{blackbox1}, so for each $\alpha<\omega_2$, $\mathbb{P}_\alpha$ satisfies the
 $\aleph_2$-p.i.c., and since {\sf CH} holds we know $\mathbb{P}_{\omega_2}$ has the $\aleph_2$-chain condition.
We preserve $2^{\aleph_1}=\aleph_2$ for standard reasons, as we are iterating $\aleph_2$-c.c. posets, and (by induction) each iterand is forced
to have cardinality at most $2^{\aleph_1}=\aleph_2$.
\end{proof}

We now have everything we need to construct our model.

\begin{theorem}
Assume $2^{\aleph_0}=\aleph_1$ and $2^{\aleph_1}=\aleph_2$.  There is an admissible iteration $\bar{\mathbb{P}}$ so that
\begin{equation}
\forces_{\omega_2}\text{`` {\sf CH} } + \circledast \text{''}.
\end{equation}
\end{theorem}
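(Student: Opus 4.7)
The plan is to build $\bar{\mathbb{P}} = \langle \mathbb{P}_\alpha, \dot{\mathbb{Q}}_\beta : \alpha \leq \omega_2, \beta < \omega_2\rangle$ as an admissible countable-support iteration in which the bookkeeping catches, at some stage $<\omega_2$, every name for a relevant triple. In the ground model, fix a bookkeeping surjection $\pi: \omega_2 \to \omega_2 \times \omega_2$ with $\pi(\xi) = (\alpha_\xi, \gamma_\xi)$, $\alpha_\xi \le \xi$, and such that every pair $(\alpha, \gamma)$ is hit cofinally often. Proceed by recursion on $\xi < \omega_2$. Assume the iteration up to $\xi$ is admissible; then by the preceding lemma (applied to the initial segment) together with Proposition~\ref{blackbox1}, both $\mathbb{P}_\xi$ and $\mathbb{P}_{\alpha_\xi}$ satisfy the $\aleph_2$-c.c.\ and force {\sf CH} together with $2^{\aleph_1}=\aleph_2$. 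Hence in $V^{\mathbb{P}_{\alpha_\xi}}$ there are at most $\aleph_2$ relevant triples, and using a ground-model wellorder of names one fixes, uniformly in $\alpha_\xi$, an enumeration $\langle \dot{\vec{X}}^{\alpha_\xi}_\gamma : \gamma<\omega_2\rangle$ of all $\mathbb{P}_{\alpha_\xi}$-names for relevant triples. Setting $\dot{\vec{X}} := \dot{\vec{X}}^{\alpha_\xi}_{\gamma_\xi}$, Theorem~\ref{kill} says that $\mathbb{P}_\xi$ forces $\dot{\vec{X}}$ to be of a unique Type $n\in\{0,1,2\}$, and we let $\dot{\mathbb{Q}}_\xi$ be a $\mathbb{P}_\xi$-name for $\mathbb{P}^n[\dot{\vec{X}}]$. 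This iterand is forced to be admissible, and taking countable support at limits keeps $\bar{\mathbb{P}}$ admissible.

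The iteration therefore satisfies the conclusions of the preceding lemma, so $\mathbb{P}_{\omega_2}$ is totally proper and has the $\aleph_2$-c.c. Fix a generic $G \subseteq \mathbb{P}_{\omega_2}$; by Corollary~\ref{cor1}(1), {\sf CH} continues to hold in $V[G]$. Suppose toward contradiction that $V[G] \models \neg\circledast$. Proposition~\ref{prop2} supplies a bad triple $\vec{X} = \langle X, Y, z\rangle$ in $V[G]$, whose underlying set is $\omega_1$ and which, being separable regular under {\sf CH}, has weight at most $\aleph_1$; hence the whole triple is coded by a subset of $\omega_1$ in $V[G]$. The $\aleph_2$-c.c.\ of $\mathbb{P}_{\omega_2}$ forces any such code to appear in some $V[G_\alpha]$ with $\alpha<\omega_2$, giving a $\mathbb{P}_\alpha$-name $\dot{\vec{X}}$ realized as $\vec{X}$ by $G$. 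By the choice of $\pi$, there exists $\xi \ge \alpha$ with $\alpha_\xi = \alpha$ and $\dot{\vec{X}}^{\alpha}_{\gamma_\xi} = \dot{\vec{X}}$; at that stage $\dot{\mathbb{Q}}_\xi = \mathbb{P}^n[\dot{\vec{X}}]$ is exactly the forcing of Theorem~\ref{kill}, so $\vec{X}$ is not a bad triple in $V[G_{\xi+1}]$. Corollary~\ref{cor1}(2) transports this to $V[G]$, contradicting the choice of $\vec{X}$. Hence $V[G] \models \text{\sf CH} + \circledast$.

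The principal technical point is coordinating the bookkeeping with the reflection and preservation theorems: every bad triple in the final model needs to have a $\mathbb{P}_\alpha$-name at some $\alpha<\omega_2$ (provided by the $\aleph_2$-c.c.\ together with the fact that a bad triple on $\omega_1$ is coded by a set of size $\aleph_1$), and its destruction at a later stage must persist to $V[G]$ (which is the content of Corollary~\ref{cor1}). With both ingredients already in place, the construction reduces to a standard ``catch-your-tail'' bookkeeping over Theorem~\ref{kill}; the substantive mathematical work has already been carried out in the preceding sections, and no new consistency input is required.
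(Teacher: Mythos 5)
Your proposal is correct and follows essentially the same route as the paper: iterate over relevant triples handed out by bookkeeping, apply Theorem~\ref{kill} at each stage, and use the $\aleph_2$-c.c.\ (via Proposition~\ref{blackbox1} and the counting of triples on $\omega_1$) together with Corollary~\ref{cor1} to see that every bad triple in $V[G]$ is caught, killed, and stays dead. The only difference is that you spell out the ``standard bookkeeping'' (the surjection $\pi$ and the enumeration of nice names) that the paper leaves implicit.
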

\begin{proof}
The iteration is built by induction on $\alpha<\omega_2$.  At stage $\alpha$, some fixed bookkeeping procedure will hand us a $\mathbb{P}_\alpha$~name $\dot{\vec{X}}_\alpha$ for a relevant triple.
Since {\sf CH} holds in $V^{\mathbb{P}_\alpha}$, we know
\begin{equation*}
\forces_\alpha\text{``$\dot{\vec{X}}_\alpha$ is of Type $n$ for some $n<3$''},
\end{equation*}
and we let $\dot{\mathbb{Q}}_\alpha$ be a $\mathbb{P}_\alpha$ name such that
\begin{equation*}
\forces_\alpha\text{``$\dot{\mathbb{Q}}_\alpha$ is $\mathbb{P}^n[\dot{\vec{X}}_\alpha]$ where $\dot{\vec{X}}_\alpha$ is of Type $n$.''}
\end{equation*}
Standard arguments using the $\aleph_2$-chain condition allow us to arrange the bookkeeping so that every relevant triple in the
final extension $V^{\mathbb{P}_{\omega_2}}$ is considered at some stage of the iteration.  This is where we use our assumption that relevant
triples consist of topological spaces with underlying set $\omega_1$, as this guarantees there are only $\aleph_2$ relevant triples that need to be considered.

Now let $ {G}$ be a generic subset of $\mathbb{P}_{\omega_2}$,
and for $\alpha<\omega_2$ let $G_\alpha$ be the generic subset of $\mathbb{P}_\alpha$
obtained from $ {G}$.  We know that {\sf CH} is true in $V[G]$ by virtue of Corollary~\ref{cor1}, so we need only worry about $\circledast$.

Since {\sf CH} holds in $V[G]$, Proposition~\ref{prop2} tells us that it suffices to show that there are no bad triples in $V[G]$. If $\vec{X}$
is a relevant triple in $V[G]$, there is an $\alpha<\omega_2$ such that  $\vec{X}\in V[G_\alpha]$, and $\vec{X}=\vec{X}_\alpha$.

Given the forcing we do at stage $\alpha$, we know that $\vec{X}$ is not a bad triple in $V[G_{\alpha+1}]$, and we can then apply the second part of Corollary~\ref{cor1}
to conclude that $\vec{X}$ is not a bad triple in $V[G]$.  Therefore, $\circledast$ holds in $V[G]$.
\end{proof}

\section{Questions}

Finally, we collect here a few questions.  A few of these are well-known and taken from Shakhmatov's~\cite{Shak1992}, while others are specifically motivated by the construction presented here.

\begin{question}
Is it consistent that every countably compact, countably tight space is sequential?
\end{question}

This is Problem~2.12 from~\cite{Shak1992}. Dow~\cite{dow2} has shown that a counterexample exists if $\mathfrak{b}=2^{\aleph_0}$.

The proof of Proposition~\ref{chmm} required both $\circledast$ and $2^{\aleph_0}<2^{\aleph_1}$ in order to establish that compact spaces of countable tightness are sequential.
It is not clear if $\circledast$ alone is sufficient for this:

\begin{question}
Does $\circledast$  imply that compact spaces of countable tightness are sequential?
\end{question}

The preceding question is of course closely related to Ismail and Nyikos's Problem (\cite{ismailnyikos}, but see also Problem~3.2 in~\cite{Shak1992}).
\begin{question}
If a compact space is $C$-closed, must it be sequential?
\end{question}

Our proof of the consistency of $\circledast$ made heavy use of the fact that the Continuum Hypothesis held in the final model.  It is not clear
how to obtain the result without this assumption, and this leads us to the following question:

\begin{question}
 Is $\circledast$ consistent with the failure of {\sf CH}?  Does it follow from {\sf PFA}?
 \end{question}

Finally, we finish with a related question mentioned in~\cite{dow2}:

\begin{question}
Is it consistent that every regular countably tight space is $C$-closed?
\end{question}

\bibliographystyle{plain}

\end{document}